\pgfplotsset{compat=1.15}
\newtheorem{theorem}{Theorem}[section]
\newtheorem{prop}{Proposition}[section]
\theoremstyle{definition}
\newtheorem{definiz}{Definition}[section]
\newtheorem{rem}{Remark}[section]
\def\bk{\color{black}}
\newcommand{\ds}{\displaystyle}
\newcommand{\R}{\mathbb R}
\newcommand{\de}{\partial}
\newcommand{\eps}{\varepsilon}
\def\@fnsymbol#1{%
   \ifcase#1\or
   \TextOrMath\textasteriskcentered *\or
   \TextOrMath{\textasteriskcentered\textasteriskcentered}{**}\or
   \TextOrMath \textdagger \dagger\or
   \TextOrMath \textdaggerdbl \ddagger \or
   \TextOrMath \textsection  \mathsection\or
   \TextOrMath \textparagraph \mathparagraph\or
   \TextOrMath \textbardbl \|\or
   \TextOrMath {\textdagger\textdagger}{\dagger\dagger}\or
   \TextOrMath {\textdaggerdbl\textdaggerdbl}{\ddagger\ddagger}\else
   \@ctrerr \fi
}
\begin{document}

\title{Some remarks on optimal insulation \\ with Robin boundary conditions}
\author{
Francesco Della Pietra\thanks{Universit\`a degli studi di Napoli Federico II, Dipartimento di Matematica e Applicazioni ``R. Caccioppoli'', Via Cintia, Monte S. Angelo - 80126 Napoli, Italia. Email:
f.dellapietra@unina.it} \and
Francescantonio Oliva\thanks{``Sapienza'' Universit\`a di Roma, Dipartimento di Scienze di Base e Applicate per l'Ingegneria, Via Scarpa 16, 00161 Roma, Italia. Email: francescantonio.oliva@uniroma1.it}
}
 
\date{}

\maketitle

\begin{abstract}
\noindent \textbf{Abstract}. We consider an optimal insulation problem of a given domain in $\R^{N}$. We study a model of heat trasfer determined by convection; this corresponds, before insulation, to a Robin boundary value problem. We deal with a prototype which involves the first eigenvalue of an elliptic differential operator. Such optimization problem, if the convection heat transfer coefficient is sufficiently large and the total amount of insulation is small enough, presents a symmetry breaking. \\

\noindent \textbf{MSC 2020:} 49J45, 35J25, 35B06, 49R05  \\[.2cm]
\textbf{Key words and phrases:} Optimal insulation, Symmetry breaking, Robin boundary conditions
	
\end{abstract}

\begin{center}
\begin{minipage}{.8\textwidth}
\tableofcontents
\end{minipage}
\end{center}

\section{Introduction}

Thermal insulation, a field of study that has spanned multiple disciplines for centuries, has experienced a resurgence of interest in recent times. As environmental concerns have escalated and energy efficiency has become a paramount global objective, the importance of thermal insulation in reducing energy consumption and mitigating climate change has been increasingly recognized. While physics and engineering predominantly explore the development of innovative materials and technological advancements to enhance thermal insulation, the mathematical community has concurrently delved into a fascinating area of research: shape optimization. At the heart of this mathematical inquiry lies the problem of determining the optimal geometric configuration of an insulating material to minimize heat transfer.

One specific aspect of this problem, explored in depth in \cite{ab}, focuses on the thermal insulation of a solid body subject to conductive heat transfer with its surroundings (modeled by Dirichlet boundary conditions). In particular the aforementioned work generalizes previous research (see for example \cite{bcf,cf}) by considering a more general setting.

In the quoted papers the family of functionals that have been considered is 
\[
G_{\eps}(v,h) =\frac{1}{2} \int_{\Omega}|\nabla v|^2 d x+\frac{\varepsilon}{2} \int_{\Sigma_{\varepsilon}}|\nabla v|^2 d x-\int_{\Omega} f v \,d x
\]
in the Sobolev space $H_0^1\left(\Omega \cup \Sigma_{\varepsilon}\right)$, where $\Sigma_{\varepsilon}$   is a thin layer which features a variable thickness profile 
surrounding the boundary $\de\Omega$:
\begin{equation}\label{sigmaeps}
\Sigma_{\varepsilon}=\{\sigma+t \nu(\sigma): \sigma \in \partial \Omega, 0 < t<\varepsilon h(\sigma)\},
\end{equation}
 where $h$ gives the insulating profile and $\nu$, here and elsewhere, denotes the outer normal to $\de\Omega$. The temperature distribution, denoted by $u$, within the conducting body $\Omega$ is determined by minimizing the functional $G_\eps$ over the Sobolev space $H_0^1\left(\Omega \cup \Sigma_{\varepsilon}\right)$. This is equivalent to study a partial differential equation that incorporates the effects of heat sources, represented by $f\in L^{2}(\Omega)$, and the insulating properties characterized by the distribution $h$. More precisely, $u$ solves
\[
\begin{cases}-\Delta u=f & \text { in } \Omega, \\ -\Delta u=0 & \text { in } \Sigma_{\varepsilon}, \\ u=0 & \text { on } \de\Omega_\varepsilon, 
	\\ 
\displaystyle \frac{\partial u^{-}}{\partial \nu}=\varepsilon \frac{\partial u^{+}}{\partial \nu} & \text { on } \partial \Omega,\end{cases}
\]
where
\[
\Omega_\eps := \overline\Omega \cup \Sigma_\varepsilon
\]
is the union between the thermally conductive body and the insulating material. Here $\frac{\partial u^{-}}{\partial \nu}$ and $\frac{\partial u^{+}}{\partial \nu}$ denote, respectively, the normal derivatives of $u$ from inside and outside $\Omega$. As $\eps$ goes to $0$, then (\cite{ab}) $G_{\eps}$ $\Gamma-$converges  to 
\[
G(v,h)= \frac{1}{2} \int_{\Omega}|\nabla v|^2 d x+\frac{1}{2} \int_{\partial \Omega} \frac{v^2}{h} d\mathcal H^{N-1} -\int_{\Omega} f v dx
\]
defined in $H^{1}(\Omega)$ and whose minimizers $u$ are solutions to
\[
\begin{cases}-\Delta u=f & \text { in } \Omega, \\[.2cm] h \ds\frac{\partial u}{\partial \nu}+u=0 & \text { on } \partial \Omega.
\end{cases}
\]
 If one looks for the optimal configuration among all functions $h$ with fixed mass 
\begin{equation}
\label{condmintro}
\int_{\de\Omega}hd\mathcal H^{N-1}=m>0,
\end{equation} then can be shown (see \cite{b,bbn}) that the best distribution is realized by
\[
h_{opt}(\sigma)=\frac{m |u(\sigma)|}{\ds\int_{\de\Omega}|u|d\mathcal H^{N-1}},  \sigma\in \partial\Omega,
\]
where $u$ minimizes the functional
\[
\overline{G}(v) = \frac{1}{2} \int_{\Omega}|\nabla v|^2 d x+\frac{1}{2 m}\left(\int_{\partial \Omega}|v| d \mathcal{H}^{N-1}\right)^2-\int_{\Omega} f v d x.
\]
 On the other side, when heat is transferred to the outside through convection, which is a major mode of heat transfer, then Robin-type boundary conditions are best suited to the problem. Imagine, for example, the heat exchange that occurs on the surfaces of objects like a boiler, a cup of tea, or a building. In the spirit of the preceding discussion, this leads to study functionals of the type
\[
G_{\varepsilon,\beta}(v,h)=\frac{1}{2} \int_{\Omega}|\nabla v|^2 d x+\frac{\varepsilon}{2} \int_{\Sigma_{\eps}}|\nabla v|^2 d x+\frac{\beta}{2} \int_{\partial \Omega_{\eps}} v^2 d \mathcal{H}^{N-1}-\int_{\Omega} f v d x
\]
where $\beta>0$ is a fixed parameter. Minimizers of $G_{\varepsilon,\beta}$ are solutions to
\[
\begin{cases}-\Delta u_{\varepsilon}=f & \text { in } \Omega,\\ 
-\Delta u_{\varepsilon}=0 & \text { in } \Sigma_{\varepsilon},\\ 
\displaystyle \frac{\partial u_{\varepsilon}}{\partial \nu}+\beta u_{\varepsilon}=0 & \text { on } \partial \Omega_{\varepsilon}, 
\\[.2cm] 
\displaystyle \frac{\partial u_{\varepsilon}^{-}}{\partial \nu}=\varepsilon \frac{\partial u_{\varepsilon}^{+}}{\partial \nu} & \text { on } \partial \Omega.\end{cases}
\]
If we consider the asymptotics of $G_{\varepsilon,\beta}$ as $\varepsilon \rightarrow 0$, in \cite{dpnst} it is proved that, under suitable hypotheses of the regularity of $\partial \Omega$ and $h$, the functionals $G_{\varepsilon,\beta}$ $\Gamma$-converge to
\[
G_{\beta}(v,h)=\frac{1}{2} \int_{\Omega}|\nabla v|^2 d x+\frac{\beta}{2} \int_{\partial \Omega} \frac{v^2}{1+\beta h} d \mathcal{H}^{N-1}-\int_{\Omega} f v d x,
\]
with respect to the $L^2\left(\mathbb{R}^n\right)$ topology.

In contrast to the simpler case of homogeneous Dirichlet conditions, determining the optimal $h$ is a more challenging problem which, for the Robin case, has been addressed in \cite{dpnst}. In particular, it has been proved the existence of a couple $(u,h_{opt})$, with $u\in H^{1}(\Omega)$ and $h_{opt}$ given by
\begin{equation*}
	h_{opt}(\sigma):= \begin{cases}
		\displaystyle \frac{u(\sigma)}{c_u \beta} - \frac{1}{\beta} \ & |u(\sigma)|\ge c_u,
		\\
		0  & \text{otherwise},
	 \end{cases}
\end{equation*}
where $\sigma\in \partial\Omega$ and $c_u$ is the unique positive constant satisfying
\begin{equation}
\label{costantec}
c_u= \left(\frac{1}{|\{|u|\ge c_u\}| +\beta m}\right) \int_{\{|u|\ge c_u\}} |u| \ d\mathcal{H}^{N-1},
\end{equation}
which minimizes $G_{\beta}(v,h)$ among all the functions $v\in H^{1}(\Omega)$ and the functions $h\in L^{1}(\de\Omega)$ such that \eqref{condmintro} holds.

Let us stress that by $\{|u|\ge c_u\}$ we mean the set $\{\sigma\in \de\Omega\colon u(\sigma)\ge c_u\}$, while $|\{u\ge c_u\}|$ denotes its $\mathcal H^{N-1}$ Hausdorff measure.

Similar problems and related analysis have been addressed in \cite{acnt,bnnt,ck,dpNT,den,hll,hlly}.

\medskip

In this paper, we deal with the case of the eigenvalue problem under Robin boundary conditions. Indeed, let us consider operators as
\[
\langle \mathcal Av,\varphi\rangle =\int_{\Omega}\nabla v\cdot\nabla \varphi\, dx+\beta\int_{\de\Omega} \frac{v\varphi}{1+\beta h}d\mathcal H^{N-1}
\]
and the corresponding heat equation
\[
\begin{cases}
\de_{t}u+\mathcal A u =0,\\
u(0,x)=u_{0}(x).
\end{cases}
\]
 Then it is well known that, as $t$ goes to infinity,  the behavior of the temperature $u(t,x)$ mainly depends on the first eigenvalue of $\mathcal A$, that is
\[
\lambda(h)=\inf_{v\in H^{1}(\Omega)\setminus\{0\}}  \frac{\displaystyle\int_\Omega |\nabla v|^2dx + \beta\int_{
			\partial \Omega} \frac{v^2}{1+\beta h} \ d \mathcal{H}^{N-1}}{\displaystyle \int_\Omega v^2 dx},
\]
which is the main focus of the current paper. In this case, the problem of best insulation we consider is given by the following minimization:
\[
\lambda_{m}=\min_{h} \lambda(h)
\]
among all nonnegative functions $h$ with fixed mass $m$.

 Let us summarize the content of the paper. In Section 2 we rigorously define the setting of our problem, showing that a $\Gamma-$convergence result for our functional holds. Then, in Section 3, we address the study of $\lambda_{m}$. In particular, we prove that the value $\lambda_{m}$ is achieved also providing a characterization for the minimizers. Moreover, we prove some monotonicity and continuity properties for $\lambda_{m}$.  Finally, in Section 4, we show that, under suitable conditions on $\beta$ and $m$, the first eigenfuction, as well as the profile functions $h$, are not radial when $\Omega$ is a ball.

\section{Setting of the problem and $\Gamma-$convergence}

Assume that $\Omega\subset \R^{N}$ is an open bounded set with $C^{1,1}$ boundary and, just for this section, we let $h:\partial\Omega \mapsto \mathbb{R}$ to be a bounded positive Lipschitz function.  Then, for $\beta>0$, the problem of thermal insulation is related to the following functional
\begin{equation}\label{Feps}
	F_\varepsilon(v,h):=  \frac{\displaystyle\int_\Omega |\nabla v|^2 dx + \frac{\varepsilon}{2}\int_{\Sigma_\varepsilon} |\nabla v|^2 dx+ \frac{\beta}{2}\int_{
		\partial \Omega_{\varepsilon}} v^2 \ d \mathcal{H}^{N-1}}{\displaystyle \int_\Omega v^2 dx}, \  v\in H^1(\Omega_\varepsilon), v\not\equiv 0,
\end{equation}   
where \[
\Omega_\eps := \overline\Omega \cup \Sigma_\varepsilon
\] and $\Sigma_{\eps}$ is given by \eqref{sigmaeps}. Let us underline that the functional $F$ can be thought as defined in $L^2(\mathbb{R}^N)$ by requiring $F_\varepsilon(v,h) = \infty$ if $v\in L^2(\mathbb{R}^N) \setminus H^1(\Omega_\varepsilon)$.  

It is not difficult to show that, for any fixed $h$, the following problem
\[
	\tilde{\lambda} = \inf_{v\in H^1(\Omega_\varepsilon)} F_\varepsilon(v,h)
\]
admits a nonnegative minimizer $u_\varepsilon$ satisfying
\begin{figure}
\begin{tikzpicture}
\hspace{-6cm}\includegraphics[scale=3]{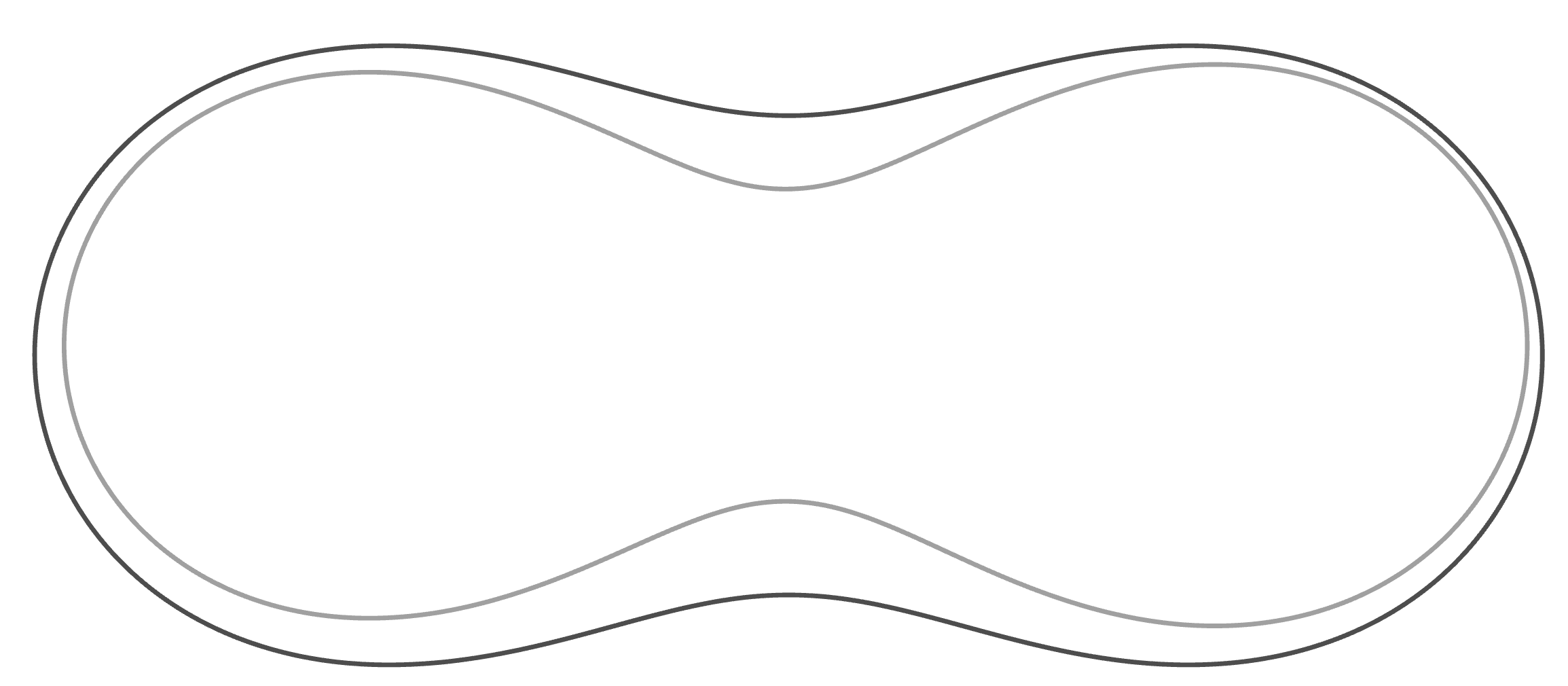}
\draw (-2.5,1.1) node[anchor=north west]{$\Omega$};
\draw (-3.6,.7) node[anchor=north west]{$\Sigma_\eps$};
\draw (-7.5,3.1) node[anchor=north west]{$-\Delta u_{\eps}=\tilde \lambda u_{\eps}$};
\draw (-5.5,1) node[anchor=north west]{\footnotesize $-\Delta u_{\eps}=0$};
\draw (-5.5,4.1) node[anchor=north west]{\footnotesize $\frac{\partial u_\varepsilon}{\partial \nu} + \beta u_\varepsilon = 0$};
\end{tikzpicture}
\end{figure}
\begin{equation}
	\begin{cases}
		\displaystyle - \Delta u_\varepsilon= \tilde{\lambda} u_\varepsilon \ &\text{in}\ \Omega,
		\\
		\displaystyle - \Delta u_\varepsilon= 0 \ &\text{in} \ \Sigma_\varepsilon,
		\\
		\displaystyle \frac{\partial u_\varepsilon}{\partial \nu} + \beta u_\varepsilon = 0\  &\text{on}\ \partial\Omega_{\varepsilon},\vspace{0.1 cm}
		\\
		\displaystyle	\frac{\partial u^-_\varepsilon}{\partial \nu} = \varepsilon\frac{\partial u^+_\varepsilon}{\partial \nu}\  &\text{on}\ \partial\Omega,
		\label{pbminveps}
	\end{cases}
\end{equation}
where, as already said, $\frac{\partial u_{\eps}^{-}}{\partial \nu}$ and $\frac{\partial u_\eps^{+}}{\partial \nu}$ stand for the normal derivatives of $u_\eps$ from inside and outside $\Omega$, respectively. In particular, last equation of \eqref{pbminveps} represents a transmission condition across $\partial\Omega$. 

Firstly we note that, as $\varepsilon \to 0$, the functional $F_\varepsilon$ $\Gamma$-converges in $L^2(\mathbb{R}^N)$ to  
\begin{equation}\label{Fset}
	F(v,h):=  \frac{\displaystyle\int_\Omega |\nabla v|^2dx + \beta\int_{
			\partial \Omega} \frac{v^2}{1+\beta h} \ d \mathcal{H}^{N-1}}{\displaystyle \int_\Omega v^2dx}, \  v\in H^1(\Omega), v\not\equiv 0,
\end{equation}
where, once again, $F$ is defined in $L^2(\mathbb{R}^N)$ as $F(v,h)=\infty$ if $v\in L^2(\mathbb{R}^N) \setminus H^1(\Omega)$. 
For the sake of completeness, let us precisely set the notion of $\Gamma$-convergence we deal with:
\begin{definiz}\label{defconv}
	The functional $F_\varepsilon$ (defined in \eqref{Feps}) $\Gamma$-converges in $L^2(\mathbb{R}^N)$ to $F$ (defined in \eqref{Fset}) as $\varepsilon\to 0$ if  for any $v\in L^2(\mathbb{R}^N)\setminus\{0\}$, it holds:
	\begin{itemize}  	
		\item[i)] \textit{liminf inequality}:  for any sequence $v_\varepsilon \in L^2(\mathbb{R}^N)$ which  converges to $v$ in $L^2(\mathbb{R}^N)$ as $\varepsilon\to 0$, then
		\begin{equation*}\label{liminf}
			\liminf_{\varepsilon \to 0} F_\varepsilon(v_\varepsilon,h) \ge F(v,h);
		\end{equation*}
		\item[ii)] \textit{limsup inequality}: there exists a sequence $v_\varepsilon \in H^1(\mathbb{R}^N)$ which converges to $v$ in $L^2(\mathbb{R}^N)$ as $\varepsilon\to 0$ such that
		\begin{equation*}\label{limsup}
			\limsup_{\varepsilon\to 0} F_\varepsilon(v_\varepsilon,h) \le F(v).
		\end{equation*}
	\end{itemize}
\end{definiz}
The following $\Gamma$-convergence result holds.
\begin{theorem}\label{gammaconvteo}
	Let $\beta > 0$ then the functional $F_\varepsilon$ (defined in \eqref{Feps}) $\Gamma$-converges to the functional $F$ (defined in \eqref{Fset}) in the sense of Definition \ref{defconv}.  
\end{theorem}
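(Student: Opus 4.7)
The strategy is to reduce the $\Gamma$-convergence of the Rayleigh quotient $F_\varepsilon\to F$ to that of its numerator alone. Let
\[
N_\varepsilon(v,h):=\int_\Omega|\nabla v|^2 dx+\frac{\varepsilon}{2}\int_{\Sigma_\varepsilon}|\nabla v|^2 dx+\frac{\beta}{2}\int_{\partial\Omega_\varepsilon}v^2 d\mathcal{H}^{N-1},
\]
\[
N(v,h):=\int_\Omega|\nabla v|^2 dx+\beta\int_{\partial\Omega}\frac{v^2}{1+\beta h} d\mathcal{H}^{N-1},\qquad D(v):=\int_\Omega v^2 dx,
\]
so that $F_\varepsilon=N_\varepsilon/D$ and $F=N/D$. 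The $\Gamma$-convergence $N_\varepsilon\to N$ in $L^2(\mathbb{R}^N)$ is essentially the statement already proved in \cite{dpnst} for $G_{\varepsilon,\beta}\to G_\beta$; removing the linear forcing term $-\int f v$ is harmless for both the liminf and the recovery-sequence construction. Since $D$ is sequentially continuous in $L^2(\mathbb{R}^N)$, the passage from numerator to quotient is then routine.

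\textbf{Liminf inequality.} Let $v_\varepsilon\to v$ in $L^2(\mathbb{R}^N)$. We may assume $\liminf F_\varepsilon(v_\varepsilon,h)<+\infty$ and, along a suitable subsequence, that $v_\varepsilon\in H^1(\Omega_\varepsilon)$. In the main case $v|_\Omega\not\equiv 0$ one has $D(v_\varepsilon)\to D(v)>0$, and the $\Gamma$-liminf for $N_\varepsilon$ yields
\[
\liminf_{\varepsilon\to 0}F_\varepsilon(v_\varepsilon,h)\ge\frac{N(v,h)}{D(v)}=F(v,h).
\]
The degenerate case $v|_\Omega\equiv 0$, in which $F(v,h)=+\infty$ by convention, is handled by rescaling: the normalized sequence $\tilde v_\varepsilon:=v_\varepsilon/\|v_\varepsilon\|_{L^2(\Omega)}$ satisfies $N_\varepsilon(\tilde v_\varepsilon,h)\le C$, hence a Poincar\'e--Robin-type inequality makes it bounded in $H^1(\Omega)$; any $L^2(\Omega)$-subsequential limit has unit norm but would be forced to vanish on $\Omega$, a contradiction.

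\textbf{Limsup inequality.} Fix $v\in H^1(\Omega)\setminus\{0\}$ (otherwise $F(v,h)=+\infty$). Using the normal parametrization $(\sigma,t)\in\partial\Omega\times[0,\varepsilon h(\sigma)]$ of $\Sigma_\varepsilon$, define the recovery sequence by $v_\varepsilon=v$ on $\Omega$ and by the linear-in-$t$ interpolation between $v(\sigma)$ at $t=0$ and the optimal outer value $v(\sigma)/(1+\beta h(\sigma))$ at $t=\varepsilon h(\sigma)$,
\[
v_\varepsilon(\sigma+t\nu(\sigma)):=v(\sigma)\left(1-\frac{\beta\,t}{\varepsilon(1+\beta h(\sigma))}\right),
\]
extended by zero outside $\Omega_\varepsilon$. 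The outer trace $v/(1+\beta h)$ is precisely the pointwise minimizer of the one-dimensional trade-off between the gradient cost $\tfrac{\varepsilon}{2}\int_0^{\varepsilon h}(\partial_t v_\varepsilon)^2 dt$ across $\Sigma_\varepsilon$ and the Robin penalty $\tfrac{\beta}{2}v_\varepsilon^2$ on $\partial\Omega_\varepsilon$. A direct computation using $C^{1,1}$-regularity of $\partial\Omega$ and Lipschitz regularity of $h$ to control the Jacobians gives $v_\varepsilon\to v$ in $L^2(\mathbb{R}^N)$ and $\lim_{\varepsilon\to 0}N_\varepsilon(v_\varepsilon,h)=N(v,h)$, whence $\lim F_\varepsilon(v_\varepsilon,h)=F(v,h)$.

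\textbf{Main obstacle.} The genuinely delicate step is the limsup: one must verify that the tangential components of $\nabla v_\varepsilon$---which involve $\nabla_\tau v$ and $\nabla_\tau h$ and are of order $1$ rather than $1/\varepsilon$---contribute only $O(\varepsilon)$ to the thin-layer energy, and that the surface Jacobian of the outer map $\sigma\mapsto\sigma+\varepsilon h(\sigma)\nu(\sigma)$ converges uniformly to $1$. These are standard but intricate computations already carried out in \cite{dpnst}, to which the present proof reduces up to an initial density step approximating a generic $v\in H^1(\Omega)$ by functions with Lipschitz trace on $\partial\Omega$.
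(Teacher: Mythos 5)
The paper's own proof is a single sentence: it simply defers to \cite[Theorem 3.1]{dpnst}, which establishes the analogous $\Gamma$-limit $G_{\varepsilon,\beta}\to G_\beta$. Your plan is the same at bottom --- the genuine thin-layer content (the recovery sequence built by linear interpolation across $\Sigma_\varepsilon$, the $O(\varepsilon)$ tangential error terms, the Jacobian control for $\sigma\mapsto\sigma+\varepsilon h(\sigma)\nu(\sigma)$) is attributed to \cite{dpnst} --- but you make explicit and correct the reduction from the Rayleigh quotient to its numerator: since the denominator $D(v)=\int_\Omega v^2$ is continuous in $L^2(\mathbb{R}^N)$, the $\Gamma$-liminf and limsup for $N_\varepsilon$ transfer to $F_\varepsilon = N_\varepsilon/D$ via $\liminf(a_\varepsilon/b_\varepsilon)\ge(\liminf a_\varepsilon)/\lim b_\varepsilon$ for $a_\varepsilon \ge 0$, $b_\varepsilon\to b>0$, and by reusing the same recovery sequence.

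The degenerate-case step in your liminf is, however, a genuine gap. After passing to $\tilde v_\varepsilon = v_\varepsilon/\|v_\varepsilon\|_{L^2(\Omega)}$ and invoking compactness, you obtain an $L^2(\Omega)$-subsequential limit $\tilde v$ of unit norm; but there is no mechanism forcing this $\tilde v$ to vanish on $\Omega$. The property $v_\varepsilon|_\Omega\to 0$ is exactly what the normalization cancels, so the conclusion ``would be forced to vanish on $\Omega$, a contradiction'' is a non sequitur. The right observation is simpler and makes the case vacuous: with the standard convention that $F_\varepsilon(w,h)<\infty$ requires $w$ to be the zero-extension of a function in $H^1(\Omega_\varepsilon)$, any $L^2(\mathbb{R}^N)$-limit $v$ of admissible $v_\varepsilon$ must be supported in $\overline\Omega = \bigcap_{\varepsilon>0}\Omega_\varepsilon$; if in addition $v|_\Omega\equiv 0$ then $v$ is supported on the Lebesgue-null set $\partial\Omega$, so $v=0$ in $L^2(\mathbb{R}^N)$, which is excluded in Definition~\ref{defconv}. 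With that replacement the argument is sound and consistent with the paper's (deferred) proof.
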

\begin{proof}
The proof can be carried on similarly to the one of \cite[Theorem 3.1]{dpnst}).
\end{proof}


\section{The minimization problem related to $F$}
\label{sec_simmetria}

For a given a fixed (positive) amount $m$ of insulating material, we are interested into finding out the optimal distribution $h$.
Then we set the problem by defining  
\[\mathcal{H}_m(\partial\Omega):=\{h\in L^1(\partial\Omega): h\ge 0, \int_{\partial\Omega} h \ d \mathcal{H}^{N-1}=m\}.\]
 Let us explicitly stress that, in this section,  we only need $h$ to be merely integrable on $\partial\Omega$. We then consider the functional $F$ (defined in \eqref{Fset}) where $h\in \mathcal{H}_m(\partial\Omega)$, namely we deal with 
\begin{equation}\label{F}
F(v,h):=  \frac{\displaystyle\int_\Omega |\nabla v|^2dx + \beta\int_{
		\partial \Omega} \frac{v^2}{1+\beta h} \ d \mathcal{H}^{N-1}}{\displaystyle \int_\Omega v^2dx}, \  v\in H^1(\Omega), v\not\equiv 0, h\in \mathcal{H}_m(\partial\Omega)
\end{equation}
where $\beta>0$.
First observe that, by reasoning as in \cite[Proposition 4.2]{dpnst}, one has that for any fixed $h\in \mathcal{H}_m(\partial\Omega)$ there exists a positive constant $C$ such that
\[
\int_{\Omega}|\nabla v|^{2}dx+\beta\int_{\de\Omega}\frac{v^{2}}{1+\beta h} \ d \mathcal{H}^{N-1} \ge C \|v\|_{H^{1}(\Omega)}.
\]
Then standard arguments allow to deduce
\[
\lambda(h)=F(u,h) = \min_{v\in H^1(\Omega),v\not=0} F(v,h)\]
where $u$ is nonnegative and it satisfies 
\begin{equation}
	\begin{cases}
		\displaystyle - \Delta u= \lambda (h) u \ &\text{in}\, \ \ \Omega,
		\\
		\displaystyle \frac{\partial u}{\partial \nu} + \frac{\beta u}{1+\beta h} = 0\  &\text{on}\,\ \partial\Omega.
		\label{pbminv}
	\end{cases}
\end{equation}

Now, in order to deal with minimization problem for $F$ with respect to both variables $v$ and $h$, we define the following quantity 
\begin{equation}
	h_v(\sigma):= \begin{cases}
		\displaystyle \frac{|v(\sigma)|}{c_v \beta} - \frac{1}{\beta} \ & |v(\sigma)|\ge c_v,
		\\
		0  &0 \le |v(\sigma)| < c_v,
		\label{h}
	 \end{cases}
\end{equation}
where $\sigma\in \partial\Omega$ and $c_v$ is the unique positive constant satisfying
\begin{equation}
\label{costantec}
c_v= \left(\frac{1}{|\{|v|\ge c_v\}| +\beta m}\right) \int_{\{|v|\ge c_v\}} v \ d\mathcal{H}^{N-1}.
\end{equation}
 Let us recall that by $\{|v|\ge c_v\}$ we mean the set $\{\sigma\in \de\Omega\colon |v(\sigma)|\ge c_v\}$, and with $|\{|v|\ge c_v\}|$ we denote its $\mathcal H^{N-1}$ Hausdorff measure.

Hence, we are in position to apply the following result (see \cite[Proposition $4.1$]{dpnst}):
\begin{prop} 
\label{minhprop}
Let $\beta, m>0$, let $v \in L^2(\partial \Omega)$, and let $h_{v} \in L^2(\de \Omega)$ be the function defined by \eqref{h}. Then $h_{v} \in L^2(\de \Omega) \cap \mathcal{H}_m(\partial \Omega)$ minimizes 
\begin{equation}\label{minbordo}
\min _{\hat{h} \in \mathcal{H}_m(\partial \Omega)}  \int_{\partial \Omega} \frac{v^2}{1+\beta \hat{h}} d \mathcal{H}^{N-1}.
\end{equation}
\end{prop}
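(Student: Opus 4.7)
The plan is to exploit the strict convexity of $\phi(t) := \frac{1}{1+\beta t}$ on $[0,\infty)$, combined with the explicit form \eqref{h} of $h_v$. The form of $h_v$ is naturally motivated by a formal Lagrange multiplier calculation for the constrained convex problem \eqref{minbordo}: the stationarity condition on $\{\hat h>0\}$ reads $\frac{\beta v^{2}}{(1+\beta \hat h)^{2}}=\mu$ for some multiplier $\mu>0$, which after setting $c_v=\sqrt{\mu/\beta}$ is exactly \eqref{h}, while \eqref{costantec} is the resulting mass constraint.

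To make this rigorous, I would pick any competitor $\hat h\in\mathcal{H}_m(\partial\Omega)$ and apply the tangent-line inequality for $\phi$ at $h_v$,
\[
\frac{1}{1+\beta \hat h}\;\geq\;\frac{1}{1+\beta h_v}\;-\;\frac{\beta\,(\hat h-h_v)}{(1+\beta h_v)^{2}}\quad\mathcal{H}^{N-1}\text{-a.e. on }\partial\Omega.
\]
Multiplying by $v^{2}$ and integrating over $\partial\Omega$, the minimality claim reduces to
\[
I(\hat h):=\int_{\partial\Omega}\frac{v^{2}\,(\hat h-h_v)}{(1+\beta h_v)^{2}}\,d\mathcal{H}^{N-1}\;\leq\;0.
\]
The key algebraic point is that, by \eqref{h}, on $\{|v|\geq c_v\}$ one has $1+\beta h_v=|v|/c_v$, hence $v^{2}/(1+\beta h_v)^{2}\equiv c_v^{2}$; on $\{|v|<c_v\}$ we have $h_v=0$ and $v^{2}/(1+\beta h_v)^{2}=v^{2}\leq c_v^{2}$. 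Since $\hat h-h_v=\hat h\geq 0$ on this second set, splitting the integral yields
\[
I(\hat h)\;\leq\;c_v^{2}\!\int_{\{|v|\geq c_v\}}(\hat h-h_v)\,d\mathcal{H}^{N-1}+c_v^{2}\!\int_{\{|v|<c_v\}}\hat h\,d\mathcal{H}^{N-1}\;=\;c_v^{2}\!\int_{\partial\Omega}(\hat h-h_v)\,d\mathcal{H}^{N-1}\;=\;0,
\]
using $\int_{\partial\Omega}\hat h\,d\mathcal{H}^{N-1}=m=\int_{\partial\Omega}h_v\,d\mathcal{H}^{N-1}$; the fact that $h_v$ itself has mass $m$ follows by integrating \eqref{h} and inserting \eqref{costantec}, and nonnegativity of $h_v$ is immediate from \eqref{h}.

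I expect the main technical nuisance to be not the argument above but the implicit preliminary assertion that $c_v$ is well defined, i.e.\ that the map
\[
\Phi(c):=c\bigl(\mathcal{H}^{N-1}(\{|v|\geq c\})+\beta m\bigr)\;-\;\int_{\{|v|\geq c\}}|v|\,d\mathcal{H}^{N-1}
\]
has a unique zero in $(0,\infty)$. Continuity of $\Phi$ relies on the exact cancellation of the jumps of the two terms at plateau values of $|v|$ (both jumps carry the same factor $c\,\mathcal{H}^{N-1}(\{|v|=c\})$); existence follows from $\Phi(0^{+})<0$ and $\Phi(+\infty)=+\infty$; uniqueness from strict monotonicity of $\Phi$, which can be checked by a direct right-derivative computation. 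Once this distribution-function analysis is in place, the convexity computation above closes the proof cleanly.
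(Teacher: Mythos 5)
Your proof is correct, and it rests on the same convexity principle as the proof the paper delegates to \cite[Prop.~4.1]{dpnst} (whose flavour one can read off from the proof of Proposition~\ref{minm} here, where the segment map $\psi(t)=\int_{\partial\Omega}\frac{v^2}{1+\beta(\hat h+t(h_v-\hat h))}\,d\mathcal H^{N-1}$ is differentiated and bounded using precisely the identity $v^2/(1+\beta h_v)^2=c_v^2$ on $\{|v|\ge c_v\}$ and $v^2\le c_v^2$ elsewhere). You have merely recast the same first-order convexity condition as the tangent-line inequality at $h_v$ rather than as a sign condition on $\psi'$; the two are equivalent and your version is a clean, self-contained rendering. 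Your ancillary remarks --- that $\int_{\partial\Omega}h_v=m$ follows from \eqref{costantec}, and that $c_v$ is well defined because $\Phi(c)=c\beta m-\int_{\partial\Omega}(|v|-c)_+\,d\mathcal H^{N-1}$ is continuous (the jumps cancel), strictly increasing, negative near $0$ and $+\infty$ at infinity (assuming $v\not\equiv 0$ on $\partial\Omega$) --- are likewise correct.
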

From the previous proposition one deduces that
\begin{equation*}\label{minvh}
	\min_{h\in \mathcal{H}_m(\partial\Omega)} \min_{\substack{v\in H^1(\Omega)\\ v\not=0}} F(v,h) = F(u,h_u).
\end{equation*}
Theorem \ref{existencethm} below shows that the following problem admits a minimum:
\begin{equation}\label{min}
	\lambda_m:=  \inf_{\substack{(v,h)\in H^1(\Omega)\times \mathcal H_{m}(\de\Omega)}} F(v,h).
\end{equation}
First of all, let us recall an useful result proved in \cite[Prop. 4.3]{dpnst}:
\begin{prop}\label{prop43}
Assume that the set $\Omega$ is connected and let $\beta>0, m>0$ be fixed. Then the functional 
\[
J(v, h)=\displaystyle\int_\Omega |\nabla v|^2dx + \beta\int_{
		\partial \Omega} \frac{v^2}{1+\beta h} \ d \mathcal{H}^{N-1}
\] satisfies the following convexity condition in $H^1(\Omega) \times \mathcal{H}_m(\partial \Omega)$,
\[
\frac{1}{2}\left[J\left(v_1, h_1\right)+J\left(v_2, h_2\right)\right]>J\left(\frac{v_1+v_2}{2}, \frac{h_1+h_2}{2}\right) \quad \forall\left(v_1, h_1\right) \neq\left(v_2, h_2\right) .
\]
\end{prop}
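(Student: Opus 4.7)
The plan is to split $J(v,h)=J_1(v)+J_2(v,h)$, with $J_1(v):=\int_\Omega|\nabla v|^2\,dx$ and $J_2(v,h):=\beta\int_{\partial\Omega}v^2/(1+\beta h)\,d\mathcal H^{N-1}$, prove midpoint convexity of each summand together with a sharp description of its equality case, and then rule out that both equalities can occur simultaneously unless $(v_1,h_1)=(v_2,h_2)$.

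For $J_1$, the parallelogram identity applied to $\nabla v_1,\nabla v_2\in L^2(\Omega;\R^N)$ yields
\[
\frac{J_1(v_1)+J_1(v_2)}{2}-J_1\!\left(\frac{v_1+v_2}{2}\right)=\int_\Omega\left|\frac{\nabla(v_1-v_2)}{2}\right|^2\,dx\ge 0,
\]
with equality iff $\nabla(v_1-v_2)=0$ a.e.\ in $\Omega$, i.e.\ $v_2=v_1-c$ for some constant $c\in\R$ (by connectedness of $\Omega$, which is precisely where that hypothesis enters). For $J_2$ I would work pointwise on $\partial\Omega$: writing $a_i:=1+\beta h_i(\sigma)\ge 1$, a short algebraic manipulation shows
\[
\frac12\!\left(\frac{v_1^2}{a_1}+\frac{v_2^2}{a_2}\right)-\frac{\bigl((v_1+v_2)/2\bigr)^2}{(a_1+a_2)/2}=\frac{(v_1 a_2-v_2 a_1)^2}{2\,a_1 a_2(a_1+a_2)}\ge 0,
\]
and integrating over $\partial\Omega$ (times $\beta$) delivers midpoint convexity of $J_2$, with pointwise equality iff $v_1(1+\beta h_2)=v_2(1+\beta h_1)$ $\mathcal H^{N-1}$-a.e.\ on $\partial\Omega$.

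Summing the two estimates gives midpoint convexity of $J$. To promote this to the strict inequality, I would assume both estimates are equalities and derive $(v_1,h_1)=(v_2,h_2)$. Substituting $v_2=v_1-c$ into the boundary identity produces
\[
\beta\,v_1(h_2-h_1)=c(1+\beta h_1) \qquad \mathcal H^{N-1}\text{-a.e.\ on }\partial\Omega.
\]
If $c\ne 0$, then $1+\beta h_1\ge 1$ forces $h_1\ne h_2$ a.e., and the relation pins $v_1$ pointwise to $c(1+\beta h_1)/[\beta(h_2-h_1)]$; reinserting this explicit formula and integrating, while exploiting $\int(h_1-h_2)\,d\mathcal H^{N-1}=0$, yields a contradiction with $c\ne 0$. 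If $c=0$, then $v_1\equiv v_2$ and $v_1(h_1-h_2)=0$ a.e.\ on $\partial\Omega$; a parallel argument using the mass constraint then forces $h_1=h_2$.

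The main obstacle is precisely this equality-case analysis. The two pointwise convexity inequalities are classical (the parallelogram law and the Engel form of Cauchy--Schwarz as above), but simultaneously ruling out both degeneracies demands a careful combination of the trace rigidity coming from the $J_1$ step, the pointwise boundary relation from the $J_2$ step, and the integral mass constraint on the $h_i$, in the spirit of \cite[Proposition 4.3]{dpnst}.
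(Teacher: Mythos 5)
The paper does not prove this proposition; it simply recalls it from \cite[Prop.\ 4.3]{dpnst}, so there is no in-paper proof to compare against. Your decomposition $J=J_1+J_2$, the parallelogram identity for $J_1$, and the pointwise algebraic identity
\[
\frac12\left(\frac{v_1^2}{a_1}+\frac{v_2^2}{a_2}\right)-\frac{\bigl((v_1+v_2)/2\bigr)^2}{(a_1+a_2)/2}=\frac{(v_1a_2-v_2a_1)^2}{2a_1a_2(a_1+a_2)}
\]
are all correct, and this is exactly the right skeleton for the argument. The problems are concentrated in the equality-case analysis, which you yourself flagged as the main obstacle but did not actually close.

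Concretely: (i) substituting $v_2=v_1-c$ into $v_1(1+\beta h_2)=v_2(1+\beta h_1)$ gives $\beta v_1(h_2-h_1)=-c(1+\beta h_1)$, not $+c(1+\beta h_1)$. (ii) For $c\ne 0$, integrating this relation over $\partial\Omega$ and using $\int(h_2-h_1)\,d\mathcal H^{N-1}=0$ yields only $\int_{\partial\Omega}\frac{1+\beta h_1}{v_1}\,d\mathcal H^{N-1}=0$; since $v_1$ is not a priori one-signed on $\partial\Omega$, this is not a contradiction, so the mass constraint alone does not rule out $c\ne 0$. The ingredient your sketch omits is the pointwise nonnegativity $h_1,h_2\ge 0$: writing $h_2=h_1-\frac{c(1+\beta h_1)}{\beta v_1}$ and imposing $h_2\ge 0$ where $v_1>0$ (say $c>0$) forces the trace of $v_1$ to avoid the interval $(0,c]$, and one must then argue that an $H^1(\Omega)$ trace cannot have such a gap in its essential range while taking values on both sides of it on sets of positive $\mathcal H^{N-1}$-measure — a nontrivial $H^{1/2}$ regularity point, not a consequence of the mass constraint. (iii) For $c=0$, the relation $v_1(h_1-h_2)=0$ a.e.\ gives $h_1=h_2$ only where $v_1\ne 0$; if the trace of $v_1$ vanishes on a set of positive $\mathcal H^{N-1}$-measure, one can take $h_1\ne h_2$ differing only on that set and all three values of $J$ coincide, so the ``strict'' inequality as literally stated actually fails in that degenerate regime. (This edge case is harmless in the application, where the relevant $v$ is a first eigenfunction with nonvanishing boundary trace, but it means no proof can establish the statement in full generality without an extra hypothesis.) So the structure of your proof is right, but the equality-case analysis needs the nonnegativity of $h$ and a trace-regularity argument rather than the mass constraint, and the $c=0$ branch as you phrased it does not go through.
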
 
\begin{theorem}
\label{existencethm}
Let $\beta, m>0$. Then there exists a couple $(u, h_u) \in H^1(\Omega) \times \mathcal{H}_m(\partial \Omega)$ which minimizes \eqref{min} and where $h_u$ is defined into \eqref{h}. Furthermore the couple $(u,h_u)$ is a solution to \eqref{pbminv}.
\end{theorem}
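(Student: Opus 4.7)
The plan is to apply the direct method to a reduced variational problem in $v$ alone, and to handle the implicit nature of $h_v$ via a variational identity that turns the boundary integral into a continuous function of $v$.

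\smallskip

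First, by Proposition \ref{minhprop}, I rewrite $\lambda_m=\inf\{F(v,h_v):v\in H^1(\Omega),\,v\not\equiv 0\}$, and by the $0$-homogeneity of $F$ and the substitution $v\mapsto |v|$, I may assume $\|v\|_{L^2(\Omega)}=1$ and $v\ge 0$. The crucial ingredient is a variational identity for the boundary term. Setting
\[
\Psi_v(c):=\int_{\partial\Omega}(v-c)_+^2\,d\mathcal H^{N-1}+\beta m c^2,
\]
a short algebraic computation splitting $\partial\Omega$ into $\{v\ge c_v\}$ and its complement, and invoking \eqref{costantec}, yields
\[
\int_{\partial\Omega}\frac{v^2}{1+\beta h_v}\,d\mathcal H^{N-1}=\int_{\partial\Omega}v^2\,d\mathcal H^{N-1}-\min_{c\ge 0}\Psi_v(c),
\]
with unique minimizer $c=c_v$ (the stationarity condition for the convex coercive $\Psi_v$ is exactly \eqref{costantec}).

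\smallskip

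Given a minimizing sequence $v_n\ge 0$ with $\|v_n\|_{L^2(\Omega)}=1$, nonnegativity of the boundary term gives $\int_\Omega|\nabla v_n|^2\,dx\le F(v_n,h_{v_n})$, so $\{v_n\}$ is bounded in $H^1(\Omega)$. Up to a subsequence, $v_n\rightharpoonup u$ weakly in $H^1(\Omega)$, strongly in $L^2(\Omega)$ (so $\|u\|_{L^2(\Omega)}=1$, hence $u\not\equiv 0$), and strongly in $L^2(\partial\Omega)$ by compactness of the trace. The constants $c_{v_n}\le \|v_n\|_{L^1(\partial\Omega)}/(\beta m)$ are bounded, so I also extract $c_{v_n}\to \bar c\ge 0$. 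Since $(v,c)\mapsto \Psi_v(c)$ is jointly continuous on $L^2(\partial\Omega)\times[0,\infty)$ (using the $1$-Lipschitz property of $(\cdot)_+$ and Cauchy--Schwarz, uniformly in $c$ on bounded sets), the usual comparison $\Psi_{v_n}(c_{v_n})\le \Psi_{v_n}(c_u)$ together with uniqueness of the minimizer gives $\min_c\Psi_{v_n}(c)\to \min_c\Psi_u(c)$ and $\bar c=c_u$. Combined with weak lower semicontinuity of the Dirichlet energy and strong convergence of $\int_{\partial\Omega}v_n^2\,d\mathcal H^{N-1}$, the identity above makes the numerator of $F(v_n,h_{v_n})$ lower semicontinuous, while the denominator converges to $1$; hence $F(u,h_u)\le \liminf_n F(v_n,h_{v_n})=\lambda_m$, so $(u,h_u)$ attains $\lambda_m$. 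Finally, with $h_u$ fixed, minimality of $u$ for $v\mapsto F(v,h_u)$ and the first variation against test functions $\varphi\in C^\infty(\overline\Omega)$ yield precisely the system \eqref{pbminv}.

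\smallskip

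The main obstacle is the passage to the limit in the boundary integral: the map $v\mapsto \int_{\partial\Omega} v^2/(1+\beta h_v)\,d\mathcal H^{N-1}$ is not obviously continuous because of the implicit definition of $c_v$ and the jump in \eqref{h} across the level set $\{v=c_v\}$. The identity displayed above circumvents this by rewriting the boundary term via $\Psi_v$, which is jointly continuous in $(v,c)$ and convex-coercive in $c$, reducing the analysis to an elementary compactness argument.
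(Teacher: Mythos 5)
Your variational identity is correct and gives a cleaner route than the paper's. Indeed, with $v\ge 0$ one checks directly from \eqref{h} that $\int_{\partial\Omega} v^2/(1+\beta h_v)\,d\mathcal H^{N-1}=\int_{\{v<c_v\}}v^2\,d\mathcal H^{N-1}+c_v\int_{\{v\ge c_v\}}v\,d\mathcal H^{N-1}$, and expanding $\int_{\partial\Omega}v^2-\Psi_v(c_v)$ and inserting \eqref{costantec} recovers exactly this; moreover $\Psi_v$ is strictly convex and coercive on $[0,\infty)$ with stationarity condition \eqref{costantec}, so its minimizer is unique and equals $c_v$. This converts the implicit, $v$-dependent boundary term into $\int_{\partial\Omega}v^2\,d\mathcal H^{N-1}-\min_{c\ge 0}\Psi_v(c)$, and the joint continuity of $\Psi$ on $L^2(\partial\Omega)\times[0,\infty)$ together with trace compactness gives convergence of the boundary contribution along the minimizing sequence. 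The paper instead runs a contradiction argument to prove $\liminf_n c_{u_n}>0$ (testing the Euler--Lagrange equation and splitting on the value of $\lim_n\lambda(h_n)$) and then concludes $h_{u_n}\to h_u$ in $L^2(\partial\Omega)$; your reformulation sidesteps the passage to the limit in $h_{u_n}$ entirely and is both shorter and, in my view, more transparent.

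There is, however, one step you should not skip. The theorem asserts that the optimal profile is exactly the $h_u$ given by \eqref{h}, and this formula is meaningful (in particular produces an element of $\mathcal H_m(\partial\Omega)$) only when $c_u>0$, i.e.\ when the trace of $u$ is not identically zero on $\partial\Omega$. Your argument produces a $u$ that attains $\lambda_m$ via the reduced functional, which is well defined even if $c_u=0$, but it does not rule out $c_u=0$; if $c_u=0$ then $u\equiv 0$ on $\partial\Omega$ and \eqref{h} does not define a function of mass $m$. The fix is short: if $u\equiv 0$ on $\partial\Omega$ then $u\in H_0^1(\Omega)$ and $F(u,\cdot)\ge\lambda^D$, whereas taking $h\equiv m/|\partial\Omega|$ as competitor shows $\lambda_m\le\lambda^R(\beta/(1+\beta m/|\partial\Omega|))<\lambda^D$; alternatively one may invoke the Hopf Lemma as in Remark~\ref{remcost}. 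Adding this observation closes the gap, and the remainder of your argument (orthogonal weak $H^1$ extraction, trace compactness, lower semicontinuity of the numerator, first variation to get \eqref{pbminv}) is sound.
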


\begin{proof}
Let $\left(u_n, h_n\right) \in H^1(\Omega) \times \mathcal{H}_m(\partial \Omega)$ be a minimizing sequence of \eqref{min}; we can always assume that $u_{n}\ge 0$ and suppose that $h_n$ is smooth enough.  Furthermore, $u_n$ solves
\begin{equation}
\label{approx1}
\begin{cases}-\Delta u_n=\lambda(h_{n})u_{n} & \text { in } \Omega, \\ 
\displaystyle \left(1+\beta h_n\right) \frac{\partial u_n}{\partial \nu}+\beta u_n=0 & \text { on } \partial \Omega .\end{cases}
\end{equation}
Indeed, for a fixed $h_n$, we can consider the auxiliary minimum problem
\[
\min _{v \in H^1(\Omega)} F\left(v, h_n\right)
\]
whose minimizer, denoted by $\bar{u}_n$, exists and it solves \eqref{approx1}. Hence, without loss of generality, we can assume $u_n=\bar{u}_n$ and $\|u_{n}\|_{L^{2}(\Omega)}=1$. 

The sequence $u_{n}$ is bounded in $H^{1}(\Omega)$ with respect to $n$ and standard arguments give that there exists $u\in H^{1}(\Omega)$ such that
$u_{n}\rightharpoonup u$ weakly in $H^{1}(\Omega)$, $u_{n}\rightarrow u$ strongly in $L^{2}(\Omega)$ and, by compactness of the trace embedding, $u_{n}\rightarrow u$ strongly in $L^{2}(\de\Omega)$ as $n\to\infty$. In particular, it also holds that $\|u\|_{L^{2}(\Omega)}=1$.

Now we claim that 
\begin{equation}\label{claim}
\liminf_{n} c_{u_{n}} >0,
\end{equation}
where $c_{u_{n}}$ is the constant appearing in \eqref{costantec} and related to $v=u_{n}$. We reason by contradiction; in particular it holds 
\[
\liminf _{n \rightarrow \infty} c_{u_n} \geq \frac{1}{|\de\Omega|+m \beta} \int_{\partial \Omega} \liminf _{n \rightarrow+\infty} u_n \chi_{\left\{u_n \geq c_{u_n}\right\}} d \mathcal{H}^{N-1},
\]
and, assuming $\displaystyle \liminf_{n\to\infty} c_{u_{n}} = 0$, then $u_{n}\to 0$ $\mathcal H^{N-1}$-a.e. on $\de\Omega$.
However, as $u_{n}$ is a solution to \eqref{approx1}, one yields to
\[
\int_{\Omega}\nabla u_{n}\cdot \nabla \varphi dx + \beta \int_{\de\Omega} \frac{u_{n}\varphi}{1+\beta h_{n}}d\mathcal H^{N-1}= \lambda(h_{n})\int_{\Omega}u_{n}\varphi dx,\quad \forall \varphi \in H^{1}(\Omega).
\]
Then one can take $n\to\infty$ into the previous, obtaining that (recall that $0\le \frac{1}{1+\beta h_{n}}\le 1$)
\begin{equation}\label{weak}
	\int_{\Omega}\nabla u\cdot \nabla \varphi dx = \lambda'\int_{\Omega}u \varphi dx,\quad \forall \varphi \in H^{1}(\Omega),
\end{equation}
where $\lambda'=\displaystyle \lim_{n\to\infty}\lambda (h_{n})$. Therefore one has that:
\begin{enumerate}
\item if $\lambda'=0$, then $\varphi=u$ into \eqref{weak} gives that $u$ is null in $\Omega$ (recall that $u$ is zero $\mathcal{H}^{N-1}$-a.e. on $\de\Omega$);
\item if $\lambda'>0$, then $\varphi = 1$ into \eqref{weak} provides, as $u$ is nonnegative, that $u$ is null in $\Omega$.
\end{enumerate}
As $\|u\|_{L^{2}(\Omega)}=1$, one gains a contradiction and \eqref{claim} is shown to hold. 

Now observe that it follows from Proposition \ref{minhprop} that $\left(u_n, h_{u_n}\right)$ ($h_{u_n}$ is defined into \eqref{h}) is still a minimizing sequence. As $\displaystyle \liminf_{n\to\infty} c_{u_{n}}>0$, then $h_{u_n}$ converges in $L^2(\partial\Omega)$ to some function $h_u \in$ $L^2(\partial \Omega) \cap \mathcal{H}_m(\partial \Omega)$, it turns out that the couple $(u, h_u)$ is a minimizer of \eqref{min}.

\end{proof}

\begin{rem}\label{remcost}
	Let us stress that for a minimizer $(u,h_u)$ to \eqref{min} the set $\{\sigma\in \partial\Omega: u(\sigma)>c_u\}$ needs to have positive $\mathcal{H}^{N-1}$ measure. Indeed, assuming $0\le u \le c_u$ $\mathcal{H}^{N-1}$ almost everywhere on $\partial\Omega$, one yields to
	\[
	c_u=\left(\frac{1}{\left|\left\{u \geq c_u\right\}\right|+m \beta}\right) \int_{\left\{u \geq c_u\right\}}u \,d \mathcal{H}^{N-1} = \frac{c_u\left|\left\{u = c_u\right\}\right|}{\left|\left\{u = c_u\right\}\right|+m \beta},
	\]
	which can occur if and only if $c_u$ is null or equivalently $u=0$ $\mathcal{H}^{N-1}$ almost everywhere on $\partial\Omega$. This would be in contradiction with the Hopf Lemma as $u$ satisfies \eqref{pbminv}.   
\end{rem}

\begin{rem}
Let us also explicitly observe that the minimizers $(u,h_u)$ to \eqref{min} satisfy
\begin{equation}\label{pminnonrad0}
	\begin{cases}
		\displaystyle - \Delta u= \lambda_m u \ &\text{in}\, \ \ \Omega,
		\\
		\displaystyle \frac{\partial u}{\partial \nu} + \beta u= 0\  &\text{on}\,\ \partial\Omega \cap \{u< c_u\}, \vspace{0.1 cm}
		\\ 
		\displaystyle \frac{\partial u}{\partial \nu} + \beta c_u= 0\  &\text{on}\,\ \partial\Omega \cap \{u\ge c_u\}.
	\end{cases}
\end{equation}
\end{rem}

\subsection{Properties of $\lambda_{m}$}

In this section we show some properties related to $\lambda_m$, which is defined into \eqref{min}.

\begin{prop}
\label{minm}
Let $\beta, m>0$ and let $v \in L^2(\partial \Omega)$.
Then
\[
\min _{\hat{h} \in \mathcal{H}_m(\partial \Omega)}  \int_{\partial \Omega} \frac{v^2}{1+\beta \hat{h}} d \mathcal{H}^{N-1}= \min _{\hat{h} \in \mathcal{H}_q(\partial \Omega), q\le m}  \int_{\partial \Omega} \frac{v^2}{1+\beta \hat{h}} d \mathcal{H}^{N-1}.
\]
\end{prop}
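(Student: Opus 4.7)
The inclusion $\mathcal{H}_m(\partial\Omega)\subset \bigcup_{q\le m}\mathcal{H}_q(\partial\Omega)$ gives immediately the inequality
\[
\min _{\hat{h} \in \mathcal{H}_q(\partial \Omega),\, q\le m}  \int_{\partial \Omega} \frac{v^2}{1+\beta \hat{h}} d \mathcal{H}^{N-1}\le \min _{\hat{h} \in \mathcal{H}_m(\partial \Omega)}  \int_{\partial \Omega} \frac{v^2}{1+\beta \hat{h}} d \mathcal{H}^{N-1},
\]
provided both minima exist (and the minimum on the left exists thanks to Proposition \ref{minhprop}). So the heart of the matter is the reverse inequality.

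\textbf{Reverse inequality by monotone extension of mass.} The key observation is that the functional $\hat h\mapsto \int_{\partial\Omega}\frac{v^2}{1+\beta\hat h}\,d\mathcal H^{N-1}$ is \emph{pointwise decreasing} in $\hat h\ge 0$: if $\hat h_1\le \hat h_2$ on $\partial\Omega$, then $\frac{v^2}{1+\beta\hat h_2}\le \frac{v^2}{1+\beta\hat h_1}$ everywhere, and hence the corresponding integrals are ordered. Fix any $q\in(0,m]$ and any $\hat h\in\mathcal{H}_q(\partial\Omega)$. Define
\[
\tilde h:=\hat h+\frac{m-q}{\mathcal{H}^{N-1}(\partial\Omega)}.
\]
Then $\tilde h\ge \hat h\ge 0$, and $\int_{\partial\Omega}\tilde h\,d\mathcal H^{N-1}=m$, so $\tilde h\in\mathcal H_m(\partial\Omega)$. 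By the pointwise monotonicity,
\[
\int_{\partial \Omega} \frac{v^2}{1+\beta \tilde{h}} d \mathcal{H}^{N-1}\le \int_{\partial \Omega} \frac{v^2}{1+\beta \hat{h}} d \mathcal{H}^{N-1},
\]
and taking the infimum first over $\hat h\in \mathcal H_q$ and then over $q\le m$ yields the missing inequality. (The case $q=0$ forces $\hat h\equiv 0$ a.e.\ and is handled in the same way by adding the uniform constant $m/\mathcal H^{N-1}(\partial\Omega)$.)

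\textbf{What could be subtle.} The argument is essentially a one-line monotonicity observation, so there is no real obstacle; the only thing to be slightly careful about is recording that the minimum on the right-hand side is indeed attained. This follows \emph{a posteriori} from the proof above: the construction shows that the minimizer $h_v$ of \eqref{minbordo} provided by Proposition \ref{minhprop}, which lies in $\mathcal H_m(\partial\Omega)$, is already a minimizer over the larger admissible class $\bigcup_{q\le m}\mathcal H_q(\partial\Omega)$. So both minima are attained and coincide.
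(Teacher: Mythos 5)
Your proof is correct, and it takes a genuinely different and more elementary route than the paper's. The paper proves the reverse inequality by considering the linear interpolation path $\psi(t) = \int_{\partial\Omega} \frac{v^2}{1 + \beta(\hat h + t(h_v - \hat h))}\,d\mathcal H^{N-1}$ between an arbitrary $\hat h \in \mathcal H_q(\partial\Omega)$ and the optimal $h_v \in \mathcal H_m(\partial\Omega)$ from Proposition \ref{minhprop}, then invokes the derivative computation from the proof of \cite[Proposition 4.1]{dpnst} to show $\psi'(t) \le \beta c_v^2(q-m) \le 0$, so $\psi(1) \le \psi(0)$. This encodes a convexity argument tied to the specific structure of $h_v$. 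By contrast, you bypass all of that machinery: you observe that $\hat h \mapsto \int_{\partial\Omega} \frac{v^2}{1+\beta\hat h}\,d\mathcal H^{N-1}$ is pointwise decreasing in $\hat h$, and then, for any $\hat h$ of mass $q \le m$, you lift to $\tilde h := \hat h + \frac{m-q}{|\partial\Omega|} \in \mathcal H_m(\partial\Omega)$, which dominates $\hat h$ pointwise. This yields the reverse inequality with no reference to the optimizer's explicit form and no differentiation. The paper's approach has one advantage you don't get for free: the same $\psi'(t)$ computation shows the derivative is \emph{strictly} negative when $q < m$, which the authors use in Remark \ref{minmrem} to conclude strict monotonicity in $m$ (later exploited in Proposition \ref{decrescenza}). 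Your constant-offset argument as stated only gives weak monotonicity, though strictness would follow with a small additional argument (the pointwise inequality $\frac{v^2}{1+\beta\tilde h} < \frac{v^2}{1+\beta\hat h}$ is strict wherever $v \ne 0$, and $v$ cannot vanish $\mathcal H^{N-1}$-a.e.\ in the relevant context by Hopf's lemma).
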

\begin{proof}
Let $h_{v} \in \mathcal H_{m}(\de\Omega)$ be the minimizer to \eqref{minbordo} found in Proposition \ref{minhprop} and let $\hat h\in \mathcal H_{q}(\de\Omega)$ where $0<q\le m$. 	

Following line by line the proof of Proposition 4.1 in \cite{dpnst}, it holds that
\[
\psi'(t)\le -\beta c_{v}^{2}\int_{\de\Omega} h_{v} d\mathcal H^{N-1}+\beta c_{v}^{2}\int_{\de\Omega}\hat h d\mathcal H^{N-1}=\beta c_{v}^{2}(-m+q) \le 0,
\]
as $q\le m$ and where
\[
\psi(t):=\int_{\partial \Omega} \frac{v^2}{1+\beta(\hat{h}+t(h_{v}-\hat{h}))} d \mathcal{H}^{N-1}, \quad t\in[0,1].
\]
This concludes the proof.
\end{proof}
\begin{rem}
\label{minmrem}
Looking at the proof of Proposition \ref{minm}, we observe that $\psi'(t)$ is strictly negative when $q<m$. This means that the function
\[
\min _{\hat{h} \in \mathcal{H}_m(\partial \Omega)}  \int_{\partial \Omega} \frac{v^2}{1+\beta \hat{h}} d \mathcal{H}^{N-1}
\]
is strictly decreasing in $m$.
\end{rem}

As a consequence of the previous proposition, one gains that $\lambda_m$ enjoys some monotonicity properties with respect to $m$.
\begin{prop}
\label{decrescenza}
The minimum $\lambda_{m}$ of \eqref{min} is decreasing in $m> 0$.
\end{prop}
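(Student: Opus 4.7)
The plan is to deduce the monotonicity directly from Proposition \ref{minm} by rewriting $\lambda_m$ as a double infimum and exploiting the fact that, for each fixed $v$, the inner minimum over $h$ is decreasing in $m$.

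More precisely, for $v\in H^1(\Omega)\setminus\{0\}$, I would set
\[
\Phi_m(v):=\min_{\hat h\in\mathcal H_m(\partial\Omega)} \int_{\partial\Omega}\frac{v^2}{1+\beta\hat h}\,d\mathcal H^{N-1},
\]
so that
\[
\lambda_m \;=\; \inf_{v\in H^1(\Omega)\setminus\{0\}} \frac{\displaystyle\int_\Omega|\nabla v|^2\,dx+\beta\,\Phi_m(v)}{\displaystyle\int_\Omega v^2\,dx}.
\]
Proposition \ref{minm} rewrites $\Phi_m(v)$ as the infimum of $\int_{\partial\Omega}\frac{v^2}{1+\beta\hat h}\,d\mathcal H^{N-1}$ taken over all $\hat h\in\mathcal H_q(\partial\Omega)$ with $0<q\le m$. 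Hence, if $0<m_1<m_2$, the admissible class for $\Phi_{m_2}$ contains the one for $\Phi_{m_1}$, and therefore $\Phi_{m_2}(v)\le \Phi_{m_1}(v)$ for every $v$.

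Passing to the ratio that defines $\lambda_m$, I obtain that for every admissible $v$
\[
\frac{\displaystyle\int_\Omega|\nabla v|^2\,dx+\beta\,\Phi_{m_2}(v)}{\displaystyle\int_\Omega v^2\,dx}
\;\le\;
\frac{\displaystyle\int_\Omega|\nabla v|^2\,dx+\beta\,\Phi_{m_1}(v)}{\displaystyle\int_\Omega v^2\,dx},
\]
and taking the infimum over $v$ yields $\lambda_{m_2}\le\lambda_{m_1}$, which is the desired monotonicity.

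If the statement is intended in the strict sense, I would upgrade the previous inequality using Remark \ref{minmrem}: when $q<m$, one has $\psi'(t)<0$, so $\Phi_{m_2}(v)<\Phi_{m_1}(v)$ as soon as $v$ does not vanish $\mathcal H^{N-1}$-a.e.\ on $\partial\Omega$. Evaluating at a minimizer $u_{m_1}$ for $\lambda_{m_1}$ given by Theorem \ref{existencethm} and recalling Remark \ref{remcost}, which ensures that $\{u_{m_1}>c_{u_{m_1}}\}$ has positive $\mathcal H^{N-1}$-measure (so $u_{m_1}\not\equiv 0$ on $\partial\Omega$), the strict inequality propagates to $\lambda_{m_2}<\lambda_{m_1}$. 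I do not foresee a serious obstacle here: the whole argument is a monotonicity transfer from Proposition \ref{minm} to the full minimum, and the only delicate point is to make sure that strictness is not lost in the outer infimum, which is handled by plugging in a genuine minimizer with non-trivial boundary trace.
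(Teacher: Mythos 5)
Your proposal is correct and follows essentially the same route as the paper: both arguments reduce the claim to the monotonicity of the inner minimum $\Phi_m(v)$ established in Proposition~\ref{minm} / Remark~\ref{minmrem} and then conclude by evaluating at an actual minimizer, using Remark~\ref{remcost} to guarantee a non-trivial boundary trace so that the strict inequality $\psi'(t)<0$ is available. The only cosmetic difference is that you first prove the non-strict inequality for all $v$ and then upgrade it at the minimizer, whereas the paper directly writes the chain $\lambda_m = F(u_m, h_m(u_m)) > F(u_m, h_{m'}(u_m)) \ge \lambda_{m'}$ with $m'>m$; this is the same idea presented in one pass.
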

\begin{proof}
Let $m'>m$ and, for the sake of clarity, let us denote by $(u_{m},h_{m}(u_{m}))\in H^{1}(\Omega) \times \mathcal H_{m}(\de\Omega)$ the minimizer of $\lambda_{m}$. Then it follows by Proposition \ref{minm} and Remark \ref{minmrem} that it holds
\begin{multline*}
\lambda_{m}= F(u_{m},h_{m}(u_{m})) > \int_{\Omega} |\nabla u_{m}|^{2}dx+ \beta\int_{\de\Omega} \frac{u_{m}^{2}}{1+\beta h_{m'}(u_{m})} d\mathcal H^{N-1} = \\ = F(u_{m},h_{m'}(u_{m})) \ge \lambda_{m'},
\end{multline*}
where we denoted by $h_{m'}(u_{m})$ the minimizer of $\int_{\de\Omega} \frac{u_{m}^{2}}{1+\beta h} d\mathcal H^{N-1}$ with respect to $h\in \mathcal H_{m'}(\de\Omega)$. This concludes the proof.
\end{proof}

\begin{prop}\label{continuita}
The minimum $\lambda_{m}$ is continuous in $m\ge0$. 
\end{prop}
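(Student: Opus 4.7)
By Proposition \ref{decrescenza} the function $m\mapsto \lambda_m$ is nonincreasing, hence the one-sided limits $\lambda_{m^-}\ge \lambda_m\ge \lambda_{m^+}$ exist at every $m\ge 0$; continuity amounts to checking the reverse inequalities. The plan is to handle the three regimes (left of $m>0$, right of $m>0$, and $m=0$) separately, using rescaled competitors for the easy direction and a compactness-plus-Fatou argument for the hard one.

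For left-continuity at $m>0$, I would fix the minimizing couple $(u,h_u)$ provided by Theorem \ref{existencethm} and, for $m'<m$, introduce the rescaled density $\tilde h_{m'}:=(m'/m)h_u\in\mathcal H_{m'}(\partial\Omega)$ as an admissible competitor, so that $\lambda_{m'}\le F(u,\tilde h_{m'})$. Since $\tilde h_{m'}\to h_u$ pointwise on $\partial\Omega$ and the boundary integrand is dominated by $u^2$, dominated convergence forces $F(u,\tilde h_{m'})\to F(u,h_u)=\lambda_m$, whence $\lambda_{m^-}\le \lambda_m$.

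The main work is right-continuity at $m>0$. Taking $m'\downarrow m$ and normalizing minimizers by $\|u_{m'}\|_{L^2(\Omega)}=1$, the bound $\lambda_{m'}\le \lambda_m$ gives an $H^1(\Omega)$ bound on $u_{m'}$, so up to a subsequence $u_{m'}\rightharpoonup u$ in $H^1$, strongly in $L^2(\Omega)$ and, by compactness of the trace, in $L^2(\partial\Omega)$, with $\|u\|_{L^2(\Omega)}=1$. I would then repeat verbatim the contradiction argument of Theorem \ref{existencethm} to conclude $\liminf c_{u_{m'}}>0$, while a uniform $L^\infty$ bound on $u_{m'}$ (standard elliptic regularity, using that the Robin coefficient $\beta/(1+\beta h_{u_{m'}})$ is bounded by $\beta$) yields $\limsup c_{u_{m'}}<\infty$. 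Passing to a further subsequence, $c_{u_{m'}}\to c^*>0$, so formula \eqref{h} together with a.e.\ trace convergence gives
\[
h_{u_{m'}}(\sigma)\;\longrightarrow\;h^*(\sigma):=\frac{(u(\sigma)-c^*)^+}{\beta\,c^*}\qquad \mathcal H^{N-1}\text{-a.e.\ on }\partial\Omega,
\]
with $\int_{\partial\Omega}h^*\,d\mathcal H^{N-1}=\lim m'=m$ by mass conservation and dominated convergence, hence $h^*\in\mathcal H_m(\partial\Omega)$. Another application of dominated convergence on the uniformly bounded integrand $u_{m'}^2/(1+\beta h_{u_{m'}})$, combined with weak lower semicontinuity of the Dirichlet energy, gives $\liminf_{m'\downarrow m}\lambda_{m'}\ge F(u,h^*)\ge \lambda_m$, proving $\lambda_{m^+}=\lambda_m$.

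For the endpoint $m=0$, since $\lambda_0$ reduces to the classical Robin first eigenvalue with constant coefficient $\beta$, I would exploit the elementary inequality $1/(1+\beta h)\ge 1-\beta h$ valid for $h\ge 0$: for any normalized minimizer $(u_m,h_{u_m})$,
\[
\int_{\partial\Omega}\frac{u_m^2}{1+\beta h_{u_m}}\,d\mathcal H^{N-1}\ge \int_{\partial\Omega}u_m^2\,d\mathcal H^{N-1}-\beta\,m\,\|u_m\|_{L^\infty(\partial\Omega)}^2.
\]
A uniform $L^\infty$ bound on $u_m$ (again via elliptic regularity) together with the variational characterization of $\lambda_0$ then forces $\lambda_m\ge \lambda_0-Cm$, which combined with $\lambda_m\le \lambda_0$ closes the case $m\to 0^+$. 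The principal obstacle throughout is establishing the two-sided uniform control on $c_{u_{m'}}$ and on $\|u_{m'}\|_{L^\infty(\partial\Omega)}$: this is precisely what allows one to pass to the limit across the singular weight $1/(1+\beta h)$ and to identify $h^*$ as an admissible density of mass $m$.
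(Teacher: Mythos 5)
Your strategy and the paper's agree on the easy side (rescaled competitors for $\lambda_{m^-}\le\lambda_m$), but diverge sharply afterwards. The paper runs the \emph{same} rescaling trick for the other side and at $m=0$: with $\tilde h=\tfrac{m}{m+\eps}h_{u_{m+\eps}}\in\mathcal H_m$ it shows directly that $0\le\lambda_m-\lambda_{m+\eps}\le C\eps$ with $C$ controlled by $\lambda_m/(\beta m)$, which is already a local Lipschitz estimate on $(0,\infty)$ --- strictly stronger than continuity and with no compactness needed. Your right-continuity argument (normalized minimizers, $H^1$ weak compactness, $c_{u_{m'}}\to c^*>0$, a.e.\ convergence of $h_{u_{m'}}$, Vitali/lower semicontinuity) is viable but considerably heavier, and one of its ingredients is misattributed: you do not need an $L^\infty$ bound to get $\limsup_{m'} c_{u_{m'}}<\infty$, since $c_{u_{m'}}\le\frac{1}{\beta m'}\int_{\partial\Omega}|u_{m'}|\,d\mathcal H^{N-1}\le\frac{|\partial\Omega|^{1/2}}{\beta m}\|u_{m'}\|_{L^2(\partial\Omega)}$, already bounded by the trace estimate. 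Also note that you don't actually need $\int_{\partial\Omega}h^*=m$: Fatou gives $\int h^*\le m$, and then $F(u,h^*)\ge\lambda_q\ge\lambda_m$ for $q=\int h^*$ by Proposition~\ref{decrescenza}, so the uniform-integrability discussion for $h_{u_{m'}}$ can be skipped.

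The genuine gap is at $m=0$. Your inequality $\lambda_m\ge\lambda_0-\beta^2 m\,\|u_m\|_{L^\infty(\partial\Omega)}^2$ is correct, but it hinges on a uniform boundary $L^\infty$ bound for $u_m$, which you assert ``via elliptic regularity'' without proof. This is plausible (the Robin coefficient $\beta/(1+\beta h_{u_m})$ is bounded by $\beta$, so a De Giorgi--Moser boundary iteration should give a uniform sup bound), but it is a non-trivial regularity statement that the paper neither proves nor needs. The paper instead writes $0\le\lambda_0-\lambda_\eps\le\int_{\partial\Omega}u_\eps^2\,\frac{\beta^2 h_{u_\eps}}{1+\beta h_{u_\eps}}\,d\mathcal H^{N-1}$ and observes that the factor $\beta^2 h_{u_\eps}/(1+\beta h_{u_\eps})$ is bounded by $\beta$ and tends to $0$ in measure (since $\|h_{u_\eps}\|_{L^1(\partial\Omega)}=\eps\to0$), while $u_\eps^2$ is uniformly integrable on $\partial\Omega$ because the traces converge strongly in $L^2(\partial\Omega)$; a Vitali-type passage then gives the conclusion with only the $H^1\to L^2(\partial\Omega)$ compactness that is already in hand. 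You should either prove the boundary $L^\infty$ estimate you invoke or replace that step with the Vitali argument.
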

\begin{proof}
Let $m,\eps>0$ and let $(u_{m+\eps},h_{u_{m+\eps}})$ be a minimizer for $\lambda_{m+\eps}$ where $\|u_{m+\eps}\|_{L^{2}(\Omega)}=1$, namely 
\[
\lambda_{m+\eps}=F(u_{m+\eps},h_{u_{m+\eps}}).
\]
Let us denote by
\[
\tilde h= \frac{m}{m+\eps} h_{u_{m+\eps}} \in \mathcal H_{m}(\de\Omega), 
\]
as $h_{u_{m+\eps}} \in \mathcal H_{m+\eps}(\de\Omega)$. One clearly has that
\[
\lambda_{m+\eps}= F(u_{m+\eps},\tilde h) -\beta\int_{\de\Omega} u_{m+\eps}^{2}\left(\frac{1}{1+\beta \tilde h}-\frac{1}{1+\beta h_{u_{m+\eps}}}\right)d\mathcal H^{N-1}.
\]
Now observe that for the right-hand of the previous it holds
\begin{multline}\label{cont0}
0\le \int_{\de\Omega} u_{m+\eps}^{2}\left(\frac{1}{1+\beta \tilde h}-\frac{1}{1+\beta h_{u_{m+\eps}}}\right)d\mathcal H^{N-1} = \\ = \frac{\eps}{m+\eps} \int_{\de\Omega} u_{m+\eps}^{2} \frac{\beta h_{u_{m+\eps}}}{(1+\beta \tilde h)(1+\beta h_{u_{m+\eps}})}  d\mathcal H^{N-1} \le 
\\
\le \frac{\eps}{m} \int_{\de\Omega} \frac{u_{m+\eps}^{2}}{1+\beta  h_{u_{m+\eps}}} d\mathcal H^{N-1}
\end{multline}
and the right-hand of the previous is bounded in $\eps$ thanks also to Proposition \ref{decrescenza}; then we conclude that
\begin{equation}
\label{diseg1}
\lambda_{m+\eps}\ge  F(u_{m+\eps},\tilde h) - C\eps,
\end{equation}
for some positive constant $C$ independent of $\eps$. Hence, as $\tilde h \in \mathcal H_{m}(\de\Omega)$, by the variational definition of $\lambda_{m}$, \eqref{diseg1} and Proposition \ref{decrescenza} we get
\[
0\le \lambda_{m}-\lambda_{m+\eps} \le F(u_{m+\eps},\tilde h)- F(u_{m+\eps},\tilde h) +C \eps= C\eps,
\]
which proves the continuity in case $m>0$.

 Now, if $m=0$, then one can reason as for obtaining \eqref{cont0} in order to deduce 
\begin{equation}\label{cont1}
0\le \lambda_{0}-\lambda_{\eps} \le \int_\Omega u^2_\eps \displaystyle \frac{\beta^2 h_{u_\eps}}{1+\beta h_{u_\eps}} d\mathcal H^{N-1}.	
\end{equation}
Then, as $h_{u_\eps}$ tends to $0$ in $L^1(\partial\Omega)$ (recall $\|h_{u_\eps}\|_{L^1(\partial\Omega)}=\eps$) and as $u_\eps$ strongly converges in $L^2(\partial\Omega)$ as $\eps \to 0$, \eqref{cont1} concludes the proof. 
\end{proof}

\section{Symmetry breaking}

In this section we show a symmetry breaking phenomenon for the minimizer of \eqref{min} when the domain is a ball.


\begin{theorem}\label{teo_radial}
	Let $\Omega$ be a ball and let $(u, h_u)\in H^{1}(\Omega)\times \mathcal H_{m}(\de\Omega)$ be the nonnegative minimizer to \eqref{min} where $\beta>0$ and $h_u$ is defined in \eqref{h}. Then there exists a positive $\beta^*$ such that:
	\begin{enumerate}
	\item if $0< \beta<\beta^*$, then $u$ is radial for any $m>0$;
	\item if $\beta>\beta^*$, then there exists $\overline{m}(\beta)$ such that $u$ is not radial when $m<\overline{m}(\beta)$, while it is radial if $m> \overline{m}(\beta)$.
	\end{enumerate} 
\end{theorem}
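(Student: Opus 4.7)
The first step is a radial reduction. If $v\in H^1(\Omega)$ is radial then its trace on $\partial\Omega$ is a positive constant, so formula \eqref{h} forces $h_v$ to be the constant $m/|\partial\Omega|$. Hence the restriction of the minimization \eqref{min} to radial pairs produces
\[
\lambda_m^{\mathrm{rad}} := \mu_1\!\left(\Omega; \frac{\beta}{1+\beta m/|\partial\Omega|}\right),
\]
where $\mu_1(\Omega;\alpha)$ denotes the first Robin eigenvalue of $-\Delta$ on the ball with parameter $\alpha\ge 0$. Standard facts about Robin eigenvalues on the ball give that $m\mapsto \lambda_m^{\mathrm{rad}}$ is continuous, strictly decreasing from $\mu_1(\Omega;\beta)$ to $0$, and that on a ball the associated eigenfunction is radial and unique up to normalization. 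Thus $u$ is radial if and only if $\lambda_m=\lambda_m^{\mathrm{rad}}$.

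The second step is to produce non-radial competitors and to identify the critical threshold
\[
\beta^* := \inf\{\beta>0 : \lambda_m<\lambda_m^{\mathrm{rad}} \text{ for some } m>0\}.
\]
By definition, if $0<\beta<\beta^*$ then $\lambda_m=\lambda_m^{\mathrm{rad}}$ for every $m$, so (1) is immediate. To show $\beta^*<\infty$ and to handle (2), I would use a test pair $(\varphi_A,h_A)$, where $A\subset \partial\Omega$ is a spherical cap, $h_A=(m/|A|)\chi_A$ concentrates the allowed mass on $A$, and $\varphi_A$ is the first eigenfunction of a mixed problem with a weak Robin condition on $A$ and the full Robin condition with parameter $\beta$ on $\partial\Omega\setminus A$. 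Evaluating $F(\varphi_A,h_A)$ explicitly (using separation of variables and the Neumann--Robin mixed spectrum of the ball) and comparing with the closed-form expression of $\lambda_m^{\mathrm{rad}}$, one finds a regime in the $(\beta,m)$-plane where the non-radial value is strictly smaller. In particular, for $\beta$ large and $m$ small, the penalty $\beta\int_{\partial\Omega}v^2/(1+\beta h)\,d\mathcal H^{N-1}$ favours concentration of $h$ over equidistribution, producing the strict inequality.

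The third step is to show that, for each fixed $\beta>\beta^*$, the set $\{m>0:\lambda_m<\lambda_m^{\mathrm{rad}}\}$ is an interval of the form $(0,\overline m(\beta))$. For $m$ large, the effective Robin parameter $\beta/(1+\beta m/|\partial\Omega|)$ is small and both $\lambda_m$ and $\lambda_m^{\mathrm{rad}}$ are close to the zero Neumann eigenvalue with constant (hence radial) ground state; a perturbative argument around this limit, combined with the uniqueness (up to sign) of the first Neumann eigenfunction, yields $\lambda_m=\lambda_m^{\mathrm{rad}}$ for all $m$ above some threshold. For small $m$, one can rescale the non-radial competitor of Step 2 to preserve the strict inequality. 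Continuity of $m\mapsto\lambda_m$ (Proposition \ref{continuita}) and of $m\mapsto\lambda_m^{\mathrm{rad}}$ then produces the threshold $\overline m(\beta)$ by setting $\overline m(\beta):=\sup\{m>0 : \lambda_m<\lambda_m^{\mathrm{rad}}\}$.

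\textbf{Main obstacle.} The most delicate point is verifying that the transition in $m$ happens at a \emph{single} threshold, i.e.\ that $\lambda_m^{\mathrm{rad}}-\lambda_m$ changes sign only once. A priori, the non-radial competitor and the radial value could cross each other several times. A convincing argument therefore has to exploit either a strict monotonicity of $m\mapsto \lambda_m^{\mathrm{rad}}-\lambda_m$ coming from the convexity statement of Proposition \ref{prop43}, or a sharp scaling property of the optimal non-radial profile with respect to $m$. Overcoming this monotonicity issue, rather than the construction of any particular competitor, is the genuine technical core of the theorem.
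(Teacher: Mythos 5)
Your plan has the right general shape (compare the genuine minimum to the radial minimum, show a strict gap appears for $\beta$ large and $m$ small), but it is incomplete precisely at the point you yourself flag, and the way the paper closes exactly that gap is conceptually different from what you attempt. You compare $\lambda_m$ to the \emph{moving} benchmark $\lambda_m^{\mathrm{rad}}$, which forces you to understand the sign of $\lambda_m^{\mathrm{rad}}-\lambda_m$ as a function of $m$; neither Proposition~\ref{prop43} nor any obvious scaling property of $h$ gives you the needed monotonicity of this difference, and you leave it as an ``obstacle." In the paper's proof this difficulty disappears because the comparison point is the \emph{fixed} first nontrivial Neumann eigenvalue $\lambda^N$ of \eqref{neumann}: one shows that $\lambda_m(\beta)>\lambda^N$ forces non-radiality (by perturbing a putative radial minimizer $u$ along the first Neumann eigenfunction $v$, using $\int_{\partial\Omega}v\,d\mathcal H^{N-1}=0$ in \eqref{pminnonrad0bis} to compute $F(u+\varepsilon v,h_{u+\varepsilon v})=(\lambda_m+\varepsilon^2\lambda^N)/(1+\varepsilon^2)<\lambda_m$), while $\lambda_m(\beta)<\lambda^N$ forces radiality (via spherical symmetrization as in \cite{den,sper} and testing \eqref{pminnonrad} with an antisymmetric Neumann eigenfunction spherically symmetric in the symmetrization direction). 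Once these two facts are in hand, the single-threshold structure follows for free from Propositions~\ref{decrescenza} and~\ref{continuita}: $m\mapsto\lambda_m(\beta)$ is continuous and strictly decreasing from $\lambda^R(\beta)$ down to $0$, so either it never reaches $\lambda^N$ (which happens iff $\lambda^R(\beta)\le\lambda^N$, giving $\beta^*$ implicitly via $\lambda^R(\beta^*)=\lambda^N$), or it crosses $\lambda^N$ at a unique $\overline m(\beta)$. Your construction of non-radial competitors via spherical caps and a mixed Neumann--Robin spectral calculation is not only unnecessary under this scheme, it is also far from explicit: the ``Neumann--Robin mixed spectrum of the ball" does not separate variables in any closed form once $A$ is a genuine cap, so ``evaluating $F(\varphi_A,h_A)$ explicitly" is not a step one can actually carry out as stated. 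The take-away is that your Step 1 (radial reduction and the equivalence $u$ radial $\Leftrightarrow$ $\lambda_m=\lambda_m^{\mathrm{rad}}$) is correct, but Steps 2 and 3 should be replaced by the two-sided comparison with $\lambda^N$, which simultaneously produces the threshold $\beta^*$, the threshold $\overline m(\beta)$, and their uniqueness without any extra monotonicity input.
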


\begin{rem}
As a immediate consequence of Theorem \ref{teo_radial}, one can observe that, if $\beta$ is large enough, the optimal density is not constant for a small amount of insulating material. Let also observe that, as $\beta \to \infty$, Theorem \ref{teo_radial} is consistent with the result proved in \cite[Theorem $3.1$]{bbn} (see also \cite{bbn2}).
\end{rem}
\begin{rem}\label{rem_posbordo}
	Here we stress that, if $u$ is a solution to \eqref{pminnonrad0} which is constant on $\partial\Omega$, one can show that $u>c_u$ on $\partial\Omega$; indeed this easily follows (see Remark \ref{remcost}) by the definition itself of $c_u$, which is given into \eqref{costantec}. Then \eqref{pminnonrad0} reads as 
	\begin{equation}\label{pminnonrad0bis}
		\begin{cases}
			\displaystyle - \Delta u= \lambda_m u \ &\text{in}\, \ \ \Omega,
			\\
			\displaystyle \frac{\partial u}{\partial \nu} = - \frac{\beta}{|\partial\Omega| +\beta m} \int_{\partial\Omega} u \ d\mathcal{H}^{N-1} \  &\text{on}\,\ \partial\Omega,
		\end{cases}
	\end{equation}
	which is a fact  widely  used throughout the proof of Theorem \ref{teo_radial}.
\end{rem}
\begin{proof}[Proof of Theorem \ref{teo_radial}]
First let us stress that, for the sake of simplicity and only in this proof, we keep track of the dependence of $\beta$ into $\lambda_m$. 
 
Let us observe that it follows from Proposition \ref{continuita} that 
\[\lambda_m(\beta) \stackrel{m\to 0^+}{\to} \lambda_0(\beta) = \lambda^R(\beta) := 
\min_{v\in H^1(\Omega), v\not=0} \frac{\displaystyle\int_\Omega |\nabla v|^2 dx + \beta\int_{
			\partial \Omega} v^2 \ d \mathcal{H}^{N-1}}{\displaystyle \int_\Omega v^2 dx},
\]
which is the Robin first eigenvalue; also observe that $\lambda^R(\beta)$ is actually increasing with respect to $\beta$ and it holds $0=\lambda^R(0)\le\lambda^R(\beta)< \lambda^D$ where
\[\lambda^D := \min_{v\in H^1_0(\Omega), v\not=0} \frac{\displaystyle\int_\Omega |\nabla v|^2 dx}{\displaystyle \int_\Omega v^2 dx}.\]
Moreover, as $m\to\infty$, one has that
\[\lambda_m(\beta) \stackrel{m\to \infty}{\to} \lambda_\infty = 0.\]
Finally, let us also denote by 
\begin{equation}\label{neumann}
	\lambda^N := \min_{v\in H^1(\Omega), \int_\Omega v =0, v\not= 0} \frac{\displaystyle\int_\Omega |\nabla v|^2 dx}{\displaystyle \int_\Omega v^2 dx}.
\end{equation}
Then one clearly has that  
\begin{equation*}\label{lambdaN<}
0=	\lambda_\infty<\lambda^N<\lambda^D
\end{equation*}
and, for any $m>0$ and   for any $\beta>0$, 
\begin{equation}\label{lambdam<}
0=	\lambda_\infty<\lambda_m(\beta)<\lambda^R(\beta)<\lambda^D.
\end{equation}
Now observe that there exists some positive $\beta^*$ such that $\lambda^R(\beta^*)= \lambda^N$ which means that, as \eqref{lambdam<} is in force and $\lambda^{R}(\beta)$ is increasing in $\beta$, it holds $\lambda_m(\beta)< \lambda^N$ if $\beta<\beta^*$ for any $m>0$. Moreover if $\beta>\beta^*$ there exists $\overline{m}(\beta)$ such that
\[\lambda_m(\beta)<\lambda_{\overline{m}(\beta)}(\beta) = \lambda^N \text{ for any } m>\overline{m}(\beta),\]
as both Propositions \ref{decrescenza} and \ref{continuita} are in force.\bk

In order to prove the Theorem we split the proof in two different cases.

\medskip

\textbf{Case i)}: $\beta > \beta^*$ and $m< \overline{m}(\beta)$ ($u$ is not radial).

\medskip

Under the assumption of Case i), as shown before, one has that $\lambda_m(\beta)> \lambda_N$. Here we work by contradiction, assuming that $u$, a minimizer to \eqref{min}, is radial.

\medskip

First observe that, as $u$ is radial, then it is also constant and positive on $\partial\Omega$; indeed, if one supposes that $u$ is null on $\partial\Omega$ then $\lambda_m(\beta) = \lambda^D$ which is contradiction with \eqref{lambdam<}. In this particular case, it follows from \eqref{costantec} that it needs to hold $u> c_u$ on $\partial\Omega$ (see also Remark \ref{rem_posbordo}).

\medskip

Now, for $\varepsilon>0$, we test the functional $F$ (defined in \eqref{F}) with $u+\varepsilon v$ where $v$ is the first eigenfunction of the Neumann problem (i.e. $v$ is a minimizer to \eqref{neumann}).
First observe that a simple computation takes to
\begin{multline*}
	(\lambda_m(\beta) -\lambda^N)\int_\Omega uv = -\int_{\partial\Omega} \frac{\partial u}{\partial \nu} v d\mathcal{H}^{N-1} =\\= \frac{ \beta}{|\partial\Omega| +\beta m} \left(\int_{\partial\Omega} u d\mathcal{H}^{N-1}\right) \left(\int_{\partial\Omega}  v d\mathcal{H}^{N-1}\right)=0
\end{multline*} 
where the last two equalities follow from \eqref{pminnonrad0bis} and from \eqref{neumann}, since we use that $\int_{\de\Omega}v\,d\mathcal H^{N-1}=0$.
This shows that $u$ and $v$ are orthogonal.
Also observe that, since $u$ is strictly positive, one obtains $u+\varepsilon v>c$ in $\partial\Omega$ for a positive constant $c$ and for $\varepsilon$ small enough. Then, denoting by $u_\varepsilon:= u+\varepsilon v$, one has that 
\[c_{u_\varepsilon}= \left(\frac{1}{|\{u_\varepsilon\ge c_{u_\varepsilon}\}| +\beta m}\right) \int_{\{u_\varepsilon\ge c_{u_\varepsilon}\}} u_\varepsilon \ d\mathcal{H}^{N-1},\]
which gives
\begin{equation*}\label{ceps}
\liminf_{\varepsilon \to 0}c_{u_\varepsilon}\ge  \left(\frac{1}{|\partial\Omega| +\beta m}\right) \int_{\partial\Omega} \liminf_{\varepsilon \to 0} u_\varepsilon \chi_{\{u_\varepsilon\ge c_{u_\varepsilon}\}}\ d\mathcal{H}^{N-1}.
\end{equation*}
Now observe that, if one supposes that $c_{u_\eps}$, up to subsequences, tends to zero as $\varepsilon \to 0$, then the same inequality implies that 
\begin{equation*}
	0 \ge  \left(\frac{1}{|\partial\Omega| +\beta m}\right) \int_{\partial\Omega} u \ d\mathcal{H}^{N-1},
\end{equation*}
which is a contradiction since $u$ is positive on $\partial\Omega$. Hence we have that, up to subsequences, $c_{u_\varepsilon}$ tends, as $\varepsilon \to 0$ and by   uniqueness, to $c_{u}$.
Now observe that, as Proposition \ref{minhprop} is in force, it holds 
\begin{equation*}
	F(u_\varepsilon, h_{u_\varepsilon}) \le F(u_\varepsilon, h_{u})
\end{equation*}
where $h_{u_\varepsilon}$ and $h_u$ are defined in \eqref{h}.
Then the previous inequality takes to
\begin{multline*}\label{radial0}
	\int_{\{u_\varepsilon<c_{u_\varepsilon}\}} u^2_\varepsilon d\mathcal{H}^{N-1} + \frac{1}{|\{u_\varepsilon\ge c_{u_\varepsilon}\}|+m\beta} \left(\int_{\{u_\varepsilon\ge c_{u_\varepsilon}\}} u_\varepsilon d\mathcal{H}^{N-1}\right)^2
	\\
	\le
 	\int_{\partial\Omega} \frac{u^2_\varepsilon}{1+h_u\beta} d\mathcal{H}^{N-1} .
\end{multline*}
Now, as $u>c_{u}$ on $\partial\Omega$ (see Remark \ref{rem_posbordo}), it follows that there exists $\overline{\varepsilon}$ such that $u_\eps >c_{u_\varepsilon}$ on $\partial\Omega$ for any $\varepsilon<\overline{\varepsilon}$.
Then one has
\begin{equation}\label{radial}
	\begin{aligned}
		\lambda_m(\beta) &\le F(u_{\varepsilon}, h_{u_{\varepsilon}}) =\frac{1}{1+\varepsilon^2}\left(\int_\Omega |\nabla u_\varepsilon|^2 dx +  \frac{\beta}{|\partial\Omega|+m\beta} \left(\int_{\partial\Omega} u_\varepsilon d\mathcal{H}^{N-1}\right)^2\right)
		\\
		& 
		=\frac{1}{1+\varepsilon^2}\left(\int_\Omega |\nabla u_\varepsilon|^2dx +  \frac{\beta}{|\partial\Omega|+m\beta} \left(\int_{\partial\Omega} u d\mathcal{H}^{N-1}\right)^2\right) = \frac{\lambda_m(\beta)+\varepsilon^2\lambda_N}{1+\varepsilon^2},
	\end{aligned}
\end{equation}
where we used that $u$ and $v$ are orthogonal,  $\int_{\partial\Omega} v \ d \mathcal{H}^{N-1}=0$ and, without loss of generality, we have also assumed $\int_\Omega u^2= \int_\Omega v^2 =1$.

Finally we observe that \eqref{radial} implies $\lambda_m(\beta) \le \lambda^N$ which is in contradiction with the assumptions assumed for this case. Therefore $u$ is not radial if $\beta>\beta^*$ and $m<\overline{m}(\beta)$.

\medskip

\textbf{Case ii)}: $0< \beta<\beta^*$ or  $\beta>\beta^*$ and $m> \overline{m}(\beta)$ ($u$ is radial).

In this case it holds $\lambda_m(\beta) < \lambda^N$ and we aim on proving that only radial solutions can exist.


\medskip

Now let us assume that a non-radial solution to \eqref{min} exists. Let $u(r,\omega)$ be such solution in polar coordinates. Moreover, by spherical symmetrization (see \cite{den,sper}), one can assume that there exists $\omega_0 \in \mathcal{S}^{N-1}$ such that $u(r,\omega)$ is spherically symmetric in the direction $\omega_0$ that is
\[
u(r,\omega_1) \ge u(r,\omega_2) \ \text{for all} \ 0<r<R, \ \text{if} \ |\omega_1-\omega_0| \le |\omega_2-\omega_0|. 
\] 
In particular, $u$ minimizes
\begin{multline}\label{minimosost}
\lambda_{m}=\\ \min_{v\in H^{1}(\Omega)} \frac{\ds\int_{\Omega}|\nabla v|^{2}dx+\beta \int_{\{|v|<c_{v}\}} v^{2}d\mathcal H^{N-1}+ \frac{\beta}{|\{|v|\ge c_{v}\}|+m\beta} \left(\int_{\{|v|\ge c_{v}\}}|v|\,d\mathcal H^{N-1}\right)^{2}}{\ds\int_{\Omega} v^{2}\,dx}.
\end{multline}
Indeed, by properties of the rearrangements, the gradient integral decreases under spherical symmetrization; on the other hand, the constant $c_{u}$ in \eqref{minimosost}, as well as the boundary integrals, does not vary under symmetrization.

The function $u$ is also a solution to
\begin{equation}\label{pminnonrad}
	\begin{cases}
		\displaystyle - \Delta u= \lambda_m u \ &\text{in}\, \ \ \Omega,
		\\
		\displaystyle \frac{\partial u}{\partial \nu} + \beta u= 0\  &\text{on}\,\ \partial\Omega \cap \{u< c_u\}, \vspace{0.1 cm}
		\\ 
		\displaystyle \frac{\partial u}{\partial \nu} + \beta c_u= 0\  &\text{on}\,\ \partial\Omega \cap \{u\ge c_u\}.
	\end{cases}
\end{equation}

Then let us test \eqref{pminnonrad} with $v$ which is the first nontrivial eigenfunction of the Neumann eigenvalue problem on $\Omega$. This yields to

\begin{equation*}
	\int_\Omega \nabla u\cdot \nabla vdx - \int_{\partial\Omega} \frac{\partial u}{\partial \nu} v \ d\mathcal{H}^{N-1} = \lambda_m\int_\Omega uvdx,
\end{equation*}
which takes to
\begin{equation}\label{contest}
	- \int_{\partial\Omega} \frac{\partial u}{\partial \nu} v \ d\mathcal{H}^{N-1} = (\lambda_m-\lambda^N)\int_\Omega uvdx.
\end{equation}
Now observe that one can choose $v$ to be spherically symmetric with respect to $\omega_0$ and antisymmetric with respect to the reflection about the orthogonal plane to $\omega_0$. This means that the integral on the right-hand side of \eqref{contest} is positive as $u$ is non-radial. 

Moreover one has that $\frac{\partial u}{\partial \nu}$ is spherically symmetric as well thanks to the second and third equations in \eqref{pminnonrad} since $u$ is spherically symmetric with respect to $\omega_0$.

This allows to reasoning as before deducing that the left-hand of \eqref{contest} is positive. Then one has $\lambda_m\ge \lambda^N$  which gives a contradiction to the assumptions of the current case. This proves that $u$ is radial and the proof is concluded.   
 
\end{proof}

\section*{Acknowledgements}
This work has been partially supported by PRIN PNRR 2022 ``Linear and Nonlinear PDEs: New directions and Applications'' and by GNAMPA of INdAM.

\end{document}